\theoremstyle{plain}
{
  \newtheorem{thm}{Theorem}[section]
  
  \newtheorem{cor}[thm]{Corollary}
  \newtheorem{lem}[thm]{Lemma}

}
\renewcommand{\subsubsection}{\sssection\rm}
\newcommand{\can}{\text{\rm can}}
\newcommand{\id}{\text{\rm id}}
\newcommand{\pr}{\text{\rm pr}}
\newcommand{\inc}{\text{\rm inc}}
\newcommand{\const}{\text{\rm const}}
\newcommand{\Spec}{\text{\rm Spec}\,}
\newcommand{\Aff}{\mathbb {A}}
\newcommand{\ZZ}{\mathbb {Z}}
\newcommand \hra {\hookrightarrow }
\newcommand{\ttf}{{\text{f}}}
\renewcommand \id{\operatorname{id}}
\renewcommand \phi\varphi
\newcommand{\ad}{ad}
\newcommand{\Asc}{\mathcal A}
\newcommand{\Ysc}{Y}
\newcommand{\Zsc}{\Asc}
\newcommand{\QQ}{{\mathbb Q}}
\DeclareMathOperator{\et}{\text{\it \'et}}
\DeclareMathOperator{\GL}{GL}
\DeclareMathOperator{\PGL}{PGL}
\DeclareMathOperator{\End}{End}
\newtheorem*{thm*}{Theorem}
\begin{document}

\title
{Principal bundles of reductive groups over affine schemes}

\author{I.~Panin}
\thanks{The first author acknowledges support of the
RFBR projects 09-01-91333-NNIO-a and 10-01-00551}
\author{A.~Stavrova}\thanks{The second author acknowledges support of DFG SFB/TR 45; RFBR~10-01-00551;
research program 6.38.74.2011 of St. Petersburg State University.}


\maketitle

\begin{abstract}
Let $R$ be a semi-local regular domain containing an infinite
perfect field k, and let $K$ be the field of fractions of R. Let $G$ be a
reductive semi-simple simply connected $R$-group scheme such that
each of its $R$-indecomposable factors is isotropic.
We prove that for any Noetherian affine scheme $\Asc=\Spec A$ 
over $k$, the kernel of the map
$$ H^1_{\text{\'et}}(\Asc\times_{\Spec k} {\Spec R},G)\to H^1_{\text{\'et}}(\Asc\times_{\Spec k} \Spec K,G) $$
\noindent
induced by the inclusion of $R$ into $K$, is trivial.
If $R$ is the semi-local ring of several points on
a $k$-smooth scheme, then it suffices to require that $k$ is infinite
and keep the same assumption concerning $G$.
The results extend the Serre---Grothendieck
conjecture for such $R$ and $G$, proved in~\cite{PaSV}.

\end{abstract}

\section{Introduction}
\label{Introduction}

Recall that an $R$-group scheme $G$ is called {\it reductive}
(respectively, {\it semi-simple}; respectively, {\it simple}), if it is affine and smooth
as an $R$-scheme and if, moreover, for each ring homomorphism
$s:R\to\Omega(s)$ to an algebraically closed field $\Omega(s)$,
its scalar extension $G_{\Omega(s)}$
is a reductive (respectively, semi-simple; respectively, simple) algebraic
group over $\Omega(s)$.
This notion of a
simple $R$-group scheme coincides with the notion of a {simple
semi-simple $R$-group scheme of
\cite[Exp.~XIX, Def.~2.7 and Exp.~XXIV, 5.3]{SGA3}.}

Such an $R$-group scheme $G$ is called {\it simply-connected\/}
(respectively, {\it adjoint\/}), if for any homomorphism
$s:R\to \Omega(s)$ of $R$ to an algebraically closed field
$\Omega(s)$, the group $G_{\Omega(s)}$ is a
simply-connected (respectively, adjoint) $\Omega(s)$-group scheme (see
~\cite[Exp.~XXII, Def.~4.3.3]{SGA3}). A simple group scheme $G$ is called {\it isotropic}, if
it contains a split torus $G_{m,R}$.

We prove the following theorem, which is an extension
of the results on the Serre---Grothendieck
conjecture obtained in~\cite{PaSV}.

\begin{thm}\label{th:main}
Let $R$ be a regular semi-local domain containing a infinite perfect
field $k$. Let $K$ be the field of fractions
of $R$.
Let $G$ be an isotropic simple simply connected $R$-group scheme.

For any  Noetherian affine scheme $\Asc=\Spec A$ 
over $k$, the map
$$
H^1_{\text{\'et}}(\Asc\times_{\Spec k} \Spec R,G)\to H^1_{\text{\'et}}(\Asc\times_{\Spec k} \Spec K,G)
$$
induced by the inclusion of $R$ into $K$, has trivial kernel.
\end{thm}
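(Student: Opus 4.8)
The plan is to reduce the family statement to the geometric local case already controlled in \cite{PaSV} and to isolate the extra affine factor $\Asc$ as a harmless flat base that is merely carried along through an essentially one-variable argument. Write $B = A\otimes_k R$ and $C = A\otimes_k K$; since $k$ is a field, $C = S^{-1}B$ is the localization of $B$ at the multiplicative set $S = R\setminus\{0\}$. A $G$-torsor $P$ over $\Spec B$ is of finite presentation, because $G$ is smooth and affine, so if its restriction to $\Spec C$ is trivial then by a standard limit argument its restriction to $\Spec B_f = \Spec(A\otimes_k R_f)$ is already trivial for some nonzero $f\in R$. Thus the theorem reduces to showing that a $G$-torsor over $A\otimes_k R$ that is trivial over $A\otimes_k R_f$ is trivial over $A\otimes_k R$.

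First I would dispose of the passage from an abstract regular semi-local $k$-algebra to the geometric case. By N\'eron--Popescu desingularization — this is where perfectness of $k$ is used — $R$ is a filtered colimit of smooth $k$-algebras, and since all the data ($G$, $P$, the element $f$, and the trivialization over $A\otimes_k R_f$) are of finite presentation, they descend to $\mathcal{O}(X^\circ)\otimes_k A$ for some smooth affine $k$-variety $X$ and open $X^\circ\ni\mathbf{x}$, where $\mathbf{x}$ is the finite set of points with $R=\mathcal{O}_{X,\mathbf{x}}$; in particular the isotropic reductive $G$ spreads out to an isotropic reductive group scheme over $X^\circ$. It therefore suffices to treat the geometric semi-local case, which is precisely where only infiniteness of $k$ is required. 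Here $D=V(f)$ is the \emph{bad} divisor, $P$ is a $G$-torsor over $A\otimes_k\mathcal{O}(X^\circ)$ trivialized over $A\otimes_k\mathcal{O}(X^\circ\setminus D)$, and the goal is that $P$ become trivial after restriction to $A\otimes_k\mathcal{O}_{X,\mathbf{x}}$.

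The geometric heart of the argument is a presentation of $X$ near $\mathbf{x}$ as a relative affine curve. Using the Gabber--Quillen--Ojanguren--Panin presentation lemma (the \emph{nice triples} machinery, valid since $k$ is infinite), I would find, after passing to a Nisnevich neighborhood of $\mathbf{x}$, a smooth morphism $q:\mathcal{X}\to S$ onto a smooth affine $k$-scheme $S$ of dimension $\dim X-1$, with one-dimensional fibers, such that $D$ is finite over $S$ and a suitable section and relative compactification are available, with the pullback of $G$ and the triviality of $P$ away from $D$ preserved. All of this lives in the smooth $X$-direction and is unaffected by $\Asc$: base change by $\times_k\Asc$ keeps $q\times\id_{\Asc}:\Asc\times_k\mathcal{X}\to\Asc\times_k S$ a smooth relative curve and keeps $\Asc\times_k D$ finite over $\Asc\times_k S$, since flatness of $A$ over the field $k$ propagates these properties verbatim.

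The main obstacle, and the essential new content beyond \cite{PaSV}, is the patching step, which must now be run over the base $\Asc\times_k S$ that is in general no longer regular. Over the relative curve one has to show that a $G$-torsor whose restriction to the complement of the finite divisor $\Asc\times_k D$ is trivial is extended from $\Asc\times_k S$, hence trivial at the semi-local points above $\mathbf{x}$. In \cite{PaSV} the analogous step rests on homotopy invariance of $H^1(-,G)$ for isotropic $G$ over the \emph{regular} base $S$; with $S$ replaced by $\Asc\times_k S$ this is unavailable, as homotopy invariance of $H^1$ generally fails over singular bases. The resolution is that the one-variable invariance actually needed — triviality, for an isotropic simply connected $G$, of a $G$-torsor on the relative affine line that is trivial along a section — is governed by the elementary subgroup $E(-)\subseteq G(-)$ generated by the unipotent root subgroups attached to the split torus $\mathbb{G}_{m,R}\subseteq G$, together with its strong-approximation and dense-image properties, which hold over an arbitrary commutative ring and are stable under base change along $k\to A$. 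I would therefore re-run the one-variable patching and the extension-from-the-base argument over $A\otimes_k\mathcal{O}(S)$ in place of $\mathcal{O}(S)$, exploiting that isotropy supplies root subgroups over any base and that simple connectedness yields $G=E$ at the localizations that occur. Combined with the Nisnevich excision furnished by the presentation, this spreads the trivialization of $P$ across $D$ and produces a trivialization over $A\otimes_k\mathcal{O}_{X,\mathbf{x}}=A\otimes_k R$, as required.
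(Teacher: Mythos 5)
Your opening reductions are sound and in fact parallel the paper exactly: the finite-presentation limit argument producing $f\in R$ with $P$ trivial over $A\otimes_k R_f$, the N\'eron--Popescu step (using perfectness of $k$) reducing Theorem \ref{th:main} to the geometric case, and the observation that the presentation (``nice triples'') machinery lives entirely in the $X$-direction and is simply carried along under the flat base change $\times_k\Asc$ --- this is how Section \ref{sec:constr} of the paper produces the bundle $P_t$ on the relative affine line. The gap is in your last paragraph, i.e.\ precisely at the step you correctly single out as the essential new difficulty. You resolve it by assertion: that the needed one-variable statement ``is governed by the elementary subgroup $E$ \dots together with its strong-approximation and dense-image properties, which hold over an arbitrary commutative ring,'' and that ``simple connectedness yields $G=E$ at the localizations that occur.'' Neither claim is available in this generality. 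The equality $G(F)=E(F)$ can fail for isotropic simply connected simple groups already over fields (the Kneser--Tits problem has a negative answer in general), and over a singular base $B$ triviality of a torsor on $\Aff^1_B$ along the zero section does not imply triviality: nontrivial classes in $N\Pic(B)$ or $NK_0(B)$ (e.g.\ for a cuspidal curve $B$) give torsors on $\Aff^1_B$ trivial along $\{t=0\}$ but not trivial, and no property of root subgroups can repair this without a Zariski-local triviality input over the base. So ``re-running the one-variable patching over $A\otimes_k\mathcal O(S)$'' is not a proof; it is a restatement of the problem.

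The paper closes exactly this gap by a mechanism of which your proposal contains no analogue. Having built $P_t$ over $\Aff^1_B$, $B=A\otimes_k R$, trivial over $(\Aff^1_B)_f$ with $f$ monic, it: (i) changes the variable (Chinese remainder theorem, $k$ infinite) so that $f(0)\in R^\times$, whence $P_t$ is trivial along the zero section $\{0\}\times\Spec B$; (ii) notes (Theorem \ref{thm:local}) that the proof of \cite[Theorem 2.1]{PaSV} uses only that the base ring is semilocal, Noetherian and contains an infinite field, hence applies to every localization $B_m$ at a maximal ideal of $B$, giving triviality of $P_t$ over each $\Aff^1_{B_m}$ --- this is where all the elementary-subgroup/isotropy arguments are confined, namely to semilocal rings, where they are actually valid; and (iii) glues these local trivializations together with the zero-section triviality by Moser's extension \cite{Mo} of Quillen's local-global principle \cite{Q}, which requires $G$ to admit a closed embedding into some $\GL_n$ --- a nontrivial fact over the semilocal ring $R$, established in Lemma \ref{lem:linear} via Weyl-module representations, twisting by a cocycle, and Weil restriction along a Galois cover. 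Your proposal has no localization step supplying local triviality, no Quillen-type patching, and no linearization of $G$; these are precisely the ingredients that make the non-semilocal, non-regular base $A\otimes_k R$ tractable, and without them the argument does not close.
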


This theorem is deduced, via a theorem of D. Popescu, from its following ``geometric'' version.

\begin{thm}\label{th:main-geometric}
Let $R$ be a semi-local ring of several points on a $k$-smooth scheme over an infinite field $k$. Let $K$ be the field of fractions
of $R$.
Let $G$ be an isotropic simple simply connected $R$-group scheme.

For any  Noetherian affine scheme $\Asc=\Spec A$ 
over $k$, the map
$$
H^1_{\text{\'et}}(\Asc\times_{\Spec k} \Spec R,G)\to H^1_{\text{\'et}}(\Asc\times_{\Spec k} \Spec K,G)
$$
induced by the inclusion of $R$ into $K$, has trivial kernel.
\end{thm}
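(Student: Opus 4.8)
The plan is to prove Theorem~\ref{th:main-geometric}, the geometric
version, since Theorem~\ref{th:main} follows from it by Popescu's theorem
(Néron--Popescu desingularization: any regular ring containing a field
is a filtered colimit of smooth algebras, and both étale cohomology
$H^1_{\et}(-,G)$ and the formation of the total space
$\Asc\times_k-$ commute with such colimits, so one reduces the general
regular semi-local case to the case of semi-local rings of points on a
smooth variety). For the geometric statement, fix a $G$-torsor $\mathcal{E}$
over $\Asc\times_k\Spec R$ whose restriction to $\Asc\times_k\Spec K$ is
trivial; I must show $\mathcal{E}$ is already trivial over
$\Asc\times_k\Spec R$. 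The key structural device is that $R$ is the
semi-local ring of a finite set of closed points $x_1,\dots,x_n$ on an
affine $k$-smooth scheme $X$ of dimension $d$. By a standard limit
argument the torsor $\mathcal{E}$ and its trivialization over the generic
fibre are defined over $\Asc\times_k U$ for some open $U\subseteq X$
containing all the $x_i$, and I may pass to smaller $U$ as needed. Thus
the problem becomes: a $G$-torsor on $\Asc\times_k U$, trivial on the
generic fibre $\Asc\times_k\Spec K$, becomes trivial after restriction
to $\Asc\times_k\Spec R$.

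The heart of the argument is a \emph{geometric presentation} (a Lindel--Popescu
or Gabber-type ``nice triple'' construction) adapted to the isotropy
hypothesis on $G$. First I would choose, by the Serre--Grothendieck result
proved in \cite{PaSV}, a framework in which the torsor can be compared along
a projection. Concretely, I would produce an affine morphism
$q:\mathcal{X}\to\Aff^1_S$ (an ``elementary fibration'' relative to a base
$S$ of dimension $d-1$, or a suitable relative curve) together with a
divisor structure, so that $R$ sits inside the local ring of the relative
curve at its special points; this uses the smoothness of $X$ and that $k$
is infinite to arrange the normalizations and finite surjections
transversally. The product with the arbitrary affine base $\Asc$ is what
forces everything to be done over $\Asc\times_k(-)$ rather than over
$\Spec(-)$ alone, so every geometric construction must be stable under the
(flat, affine) base change $-\mapsto\Asc\times_k-$; since $\Asc$ is merely
Noetherian affine over $k$ this is the source of the extra care beyond
\cite{PaSV}.

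With the relative-curve presentation in hand, the technical engine is a
\emph{Horrocks-type} statement together with an
\emph{equating/patching} lemma for isotropic $G$. The isotropy of $G$ (it
contains $\mathbb{G}_{m,R}$, equivalently a proper parabolic over $R$)
supplies a nontrivial unipotent radical and hence, via the theory of
elementary subgroups $E_P(R)$ of the parabolic $P$, an ``internal
Chevalley--$A^1$-homotopy'': a $G$-torsor over $\Asc\times_k(U\times\Aff^1)$
that is trivial on the zero section and constant in the $\Aff^1$-direction
off a divisor can be trivialized. This is the affine analogue of homotopy
invariance for isotropic reductive groups and is exactly where simple
simply connectedness and isotropy are used — they guarantee that $E_P$ acts
transitively enough on the relevant torsors and that the obstruction groups
vanish. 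Using this one moves the torsor from the generic fibre, where it is
trivial, across the relative $\Aff^1$ to the special points, concluding that
$\mathcal{E}|_{\Asc\times_k\Spec R}$ is trivial.

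I expect the main obstacle to be precisely the presence of the auxiliary
affine factor $\Asc$. The nice-triple / elementary-fibration machinery of
\cite{PaSV} is geometric over $k$, but here all torsors live over
$\Asc\times_k(-)$, so I must verify that each geometric input — the
existence of the elementary fibration, the transversality of the finite
maps, the Horrocks and equating lemmas for $E_P$ — survives base change
along the possibly highly singular, infinite-dimensional scheme $\Asc$. The
delicate point is that étale-cohomological triviality must be transported
through these constructions \emph{uniformly} in $\Asc$; the isotropy
hypothesis is what makes the elementary-subgroup constructions algebraic
(polynomial in the structure constants) and therefore insensitive to the
nature of $A$, which is the crux that lets the $k$-geometric arguments of
\cite{PaSV} be upgraded to arbitrary Noetherian affine $\Asc$.
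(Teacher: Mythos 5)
Your first half matches the paper's strategy: spread the torsor out over an open $U\subseteq X$, run the nice-triple machinery of \cite{PaSV} (the equating isomorphism $\Phi$, the finite surjection $\sigma:\mathcal X'\to\Aff^1\times U$, the elementary Nisnevich square), and glue to produce a torsor $P_t$ over $\Asc\times_k\Aff^1_R$ that is trivial off the vanishing locus of a monic polynomial and whose triviality implies that of the original torsor. This works uniformly in $\Asc$ for a cheap reason: all the geometry happens over $U$ and $X$, and the factor $\Asc$ merely rides along under pullback and descent. Where your proposal breaks down is the final step. You propose to trivialize $P_t$ by a ``Horrocks-type statement together with an equating/patching lemma for isotropic $G$'' carried out directly over $\Asc\times_k(U\times\Aff^1)$, arguing that the elementary-subgroup constructions are ``polynomial in the structure constants and therefore insensitive to the nature of $A$.'' This is precisely the step that does not survive base change: the Horrocks-type theorem of \cite{PaSV} (their Theorem 2.1) and all the transitivity statements for elementary subgroups $E_P$ behind it require the base ring (apart from the $\Aff^1$-variable) to be \emph{semilocal}, and $B=A\otimes_k R$ is not semilocal for a general Noetherian $A$. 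No amount of ``uniformity in $\Asc$'' fixes this; the transitivity of the elementary action on torsors is simply not available over such $B$, and your proposal gives no substitute argument.

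The paper closes this gap by a different mechanism, and it is the actual novelty of the proof. First, one observes that the proof of \cite[Theorem 2.1]{PaSV} never uses the ``geometric type'' hypothesis: it is valid over any semilocal Noetherian ring containing an infinite field. Hence $P_t$ becomes trivial after localizing $B$ at each maximal ideal $m$ (each $B_m$ is such a ring). Second, one needs $G$ to be a \emph{linear} group scheme over $B$; this is supplied by a separate lemma (Lemma~\ref{lem:linear}) producing a closed embedding $G\hra\GL_{n,R}$ via Weyl-module representations and Weil restriction along a Galois splitting cover. Third --- and this is the key tool your proposal omits entirely --- one invokes Moser's extension of Quillen's local-global principle: a torsor over $\Aff^1_B$ under a linear group scheme that is trivial Zariski-locally over $\Spec B$ and trivial on the zero section $\{0\}\times\Spec B$ is trivial. (The zero-section triviality is arranged by a change of variable making $f(0)\in R^\times$, using that $R$ is semilocal with infinite residue fields.) So the passage from semilocal bases to the arbitrary Noetherian base $B$ is achieved by Quillen--Moser patching in the $\Aff^1$-direction, not by re-proving the isotropic-group machinery over $B$; without this ingredient your outline has no route from the local statement to the global one.
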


The proof of this theorem is given in Section~\ref{sec:mainproof}. It uses, on one hand, the constructions
and results of~\cite{PaSV}. On the other hand, it requires the following extension of Quillen's local-global principle
for projective modules~\cite[Theorem 1]{Q}, due to L.-F. Moser
(previously announced without proof by Raghunathan~\cite[Theorem 2]{R0}, and hinted in~\cite{BCW}).

\begin{thm*}\label{th:Mo}\cite[Korollar 3.5.2]{Mo}
Let $A$ be a Noetherian commutative ring, $G$ a group scheme over $A$ admitting a closed
embedding $G\to \GL_{n,A}$ for some
$n\ge 1$. Let $E$ be a $G$-torsor over $\Aff^1_A$, such that $E$ is trivial on $\Aff^1_{U_i}$ for
all elements $U_i$ of a Zariski covering $\Spec A=\bigcup U_i$, and on the zero-section $\{0\}\times \Spec A$. Then $E$ is trivial.
\end{thm*}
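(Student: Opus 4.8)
The plan is to establish a Quillen-type local--global (patching) principle for $G$-torsors, generalizing Quillen's theorem for projective modules from the automorphism group of a module to the group scheme $G\hookrightarrow\GL_{n,A}$. Write $p\colon\Aff^1_A\to\Spec A$ for the projection and $E_0=E|_{\{0\}\times\Spec A}$ for the restriction to the zero-section. Since $E_0$ is trivial by hypothesis, I fix once and for all a trivialization $\tau_0$ of $E_0$, so that $p^\ast E_0$ is the trivial torsor. The goal ``$E$ is trivial'' is then the same as ``$E$ is extended from $\Spec A$'', i.e.\ $E\cong p^\ast E_0$; concretely, it amounts to producing a global section $s$ of $E$ over $\Aff^1_A$ with $s|_{t=0}=\tau_0$. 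The role of the zero-section hypothesis is precisely to provide $\tau_0$ and thereby to normalize all the local sections to agree at $t=0$, which is what makes the patching machinery run.

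First I would set up the patching formalism. For $a\in A$ put
$$Q=\{\,a\in A : E|_{\Aff^1_{A_a}}\text{ admits a section }s_a\text{ with }s_a|_{t=0}=\tau_0|_{A_a}\,\}.$$
I claim $Q$ is an ideal of $A$ equal to all of $A$. For the containment, after refining the given covering I may assume $U_i=D(f_i)$; since $E|_{\Aff^1_{U_i}}$ is trivial it admits a section, and right-translating it by the (constant in $t$) element of $G(A_{f_i})$ measuring the discrepancy at $t=0$ normalizes it so that $s_i|_{t=0}=\tau_0|_{A_{f_i}}$. Hence $f_i\in Q$. As the $D(f_i)$ cover $\Spec A$, the $f_i$ generate the unit ideal, so once $Q$ is known to be an ideal we get $1\in Q$, which is exactly the desired global section of $E$, and $E$ is trivial.

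The substance is that $Q$ is an ideal. Stability under multiplication by $A$ is immediate by restriction to smaller principal opens. For the implication $a,b\in Q\Rightarrow a+b\in Q$, I localize at $a+b$: over $R:=A_{a+b}$ the images $\bar a,\bar b$ are comaximal, and restricting $s_a,s_b$ gives sections of $E$ over $\Aff^1_{R_{\bar a}}$ and over $\Aff^1_{R_{\bar b}}$, each equal to $\tau_0$ at $t=0$. On the overlap $\Aff^1_{R_{\bar a\bar b}}$ these two sections differ by a transition element $\theta\in G(R_{\bar a\bar b}[t])$, characterized by $s_b=s_a\cdot\theta$, with $\theta|_{t=0}=e$. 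The heart of the matter is to factor $\theta=\alpha\beta^{-1}$ with $\alpha\in G(R_{\bar a}[t])$, $\beta\in G(R_{\bar b}[t])$ and $\alpha|_{t=0}=\beta|_{t=0}=e$; granting this, $s_a\cdot\alpha$ and $s_b\cdot\beta$ coincide on the overlap, glue over the Zariski cover $\Spec R=D(\bar a)\cup D(\bar b)$ to a section of $E$ over $\Aff^1_R=\Aff^1_{A_{a+b}}$, and remain $\tau_0$ at $t=0$, so $a+b\in Q$.

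The factorization of $\theta$ is supplied by the group-scheme form of Quillen's splitting lemma, which I expect to be the main obstacle. Locally it reads: for $S$ commutative, $f\in S$, and $\phi\in G(S_f[t])$ with $\phi(0)=e$, there is $l$ such that $c\equiv d\pmod{f^l}$ in $S$ implies $\phi(ct)\phi(dt)^{-1}$ lies in the image of $G(S[t])\to G(S_f[t])$. I would prove this by exploiting the closed embedding $G\hookrightarrow\GL_{n,A}$: the coordinate ring $B$ of $G$ is then a \emph{finitely generated} $S$-algebra, so the homomorphism $\Phi\colon B\to S_f[t,Y]$ corresponding to the straight-line homotopy $Y\mapsto\phi\bigl((d+(c-d)Y)\,t\bigr)\,\phi(dt)^{-1}$ sends the finitely many generators to elements with a common denominator $f^m$. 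Here $\Phi|_{Y=0}=e$ and $\Phi|_{Y=1}=\phi(ct)\phi(dt)^{-1}$; moreover every monomial of positive $Y$-degree $j$ carries the factor $(c-d)^j$, hence $f^{lj}$, so for $l\gg m$ the generator-images clear all denominators and land in $S[t,Y]$. Thus $\Phi$ comes from an honest point of $G(S[t,Y])$, and specializing $Y=1$ gives the claim --- and crucially produces a genuine point of $G$, not merely a matrix, so the gluing above is legitimate. Feeding this into the comaximal step exactly as in Quillen's argument (choosing $u\bar a^l+v\bar b^l=1$ and peeling $\theta(t)=\theta\bigl((u\bar a^l+v\bar b^l)t\bigr)$ into a factor regular on $D(\bar a)$ and a factor regular on $D(\bar b)$ by two applications of the splitting lemma) yields the required $\alpha,\beta$ and completes the proof. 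The delicate point throughout is the passage ``integral matrix entries $\Rightarrow$ honest $G$-point'', which is exactly what the coordinate-ring formulation, available because $G$ is closed in $\GL_{n,A}$, is designed to handle.
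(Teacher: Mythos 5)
The paper does not actually prove this statement: it is imported verbatim from Moser's thesis \cite[Korollar 3.5.2]{Mo}, so the only comparison available is with the strategy of that cited proof. Your proposal reconstructs exactly that strategy --- Quillen's patching theorem transplanted from projective modules to $G$-torsors, with the closed embedding $G\hookrightarrow\GL_{n,A}$ used to run the splitting lemma at the level of honest $G$-points. The global architecture (normalizing all local sections along $t=0$ via the zero-section hypothesis, the set $Q$, the comaximal gluing, the factorization $\theta=\alpha\beta^{-1}$ via two applications of the splitting lemma) is correct and is the intended argument.

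There is, however, a genuine gap at the step you yourself single out as the heart of the matter. From the fact that the generator-images of $\Phi\colon B\to S_f[t,Y]$ lie in the image of $S[t,Y]\to S_f[t,Y]$, you conclude that ``$\Phi$ comes from an honest point of $G(S[t,Y])$''. This does not follow: the theorem assumes only that $A$ is Noetherian, so $f$ may be a zerodivisor in $S$, the map $S[t,Y]\to S_f[t,Y]$ need not be injective, and arbitrarily chosen preimages of the generator-images need not satisfy the defining relations of the coordinate ring $B$ in $S[t,Y]$ --- they satisfy them only up to elements killed by a power of $f$. Quillen's Lemma~1 faces precisely this difficulty (there, a preimage of a unit need not be a unit) and resolves it by working with \emph{universal} variables, i.e.\ with $\theta\bigl((x+y)t\bigr)\theta(yt)^{-1}$ over $S_f[x,y,t]$ rather than with fixed $c,d\in S$, and then performing one further substitution $x\mapsto f^N x$ to kill the finitely many $f$-torsion defects; this is legitimate because $B$ is finitely presented over $S$ ($A$ Noetherian, $G$ closed in $\GL_{n,A}$), the defects vanish at $x=0$, and they are annihilated by a single power $f^N$. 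Your fixed-$(c,d)$ homotopy cannot be repaired in place: the torsion exponent $N$ would depend on $c$ and $d$, whereas the integer $l$ must be chosen \emph{before} $c,d$ are given, so increasing $l$ to absorb $N$ is circular. Once the splitting lemma is restructured in the universal two-variable form and the torsion is killed Quillen's way, the rest of your argument (the denominator bound $f^m$ being independent of $c,d$, since the substitution $t\mapsto(d+(c-d)Y)t$ creates no new denominators, and the comaximal peeling) goes through and yields the theorem.
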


Using this local-global principle one more time, we obtain the following corollary of Theorem~\ref{th:main}.

\begin{cor}\label{cor:nonlocal-line}
Let $S$ be a Noetherian ring such that for any maximal ideal $m$ of $S$, the local ring $S_m$ satisfies
the conditions imposed on $R$ in Theorem~\ref{th:main}, or in Theorem~\ref{th:main-geometric}. Let $G$ be a
simple simply connected $S$-group scheme admitting a closed embedding $G\to\GL_{n,S}$
for some $n\ge 1$, and such that for any maximal ideal $m$ of $S$, the group $G_{S_m}$ is isotropic.
Let $K$ be the field of fractions of $S$. Then the natural map
$$
H^1_{\et}(S[t],G)\to H^1_{\et}(K(t),G)
$$
has trivial kernel.
\end{cor}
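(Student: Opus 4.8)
The plan is to obtain the corollary from Moser's local--global principle (the theorem stated just above), using Theorem~\ref{th:main} to supply the local triviality it requires. Fix a $G$-torsor $E$ over $S[t]=\Aff^1_S$ whose restriction to $\Spec K(t)$ is trivial, and let $p\colon\Aff^1_S\to\Spec S$ be the projection; the goal is to show $E$ is trivial. By Moser's theorem with base $S$ it suffices to trivialize $E$ over $\Aff^1_{U_i}$ for some Zariski covering $\Spec S=\bigcup_iU_i$ and over the zero-section $\{0\}\times\Spec S$.

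To get the covering, fix a maximal ideal $m$ and put $R=S_m$; by hypothesis $R$ is of the type allowed in Theorem~\ref{th:main} or~\ref{th:main-geometric}, its fraction field is $K$, and $G_R$ is isotropic simple simply connected. Taking $\Asc=\Aff^1_k$ identifies $\Asc\times_{\Spec k}\Spec R$ with $\Aff^1_{S_m}$ and $\Asc\times_{\Spec k}\Spec K$ with $\Aff^1_K$, so the theorem makes the kernel of $H^1_{\et}(\Aff^1_{S_m},G)\to H^1_{\et}(\Aff^1_K,G)$ trivial. Thus it is enough to trivialize $E$ on the generic fibre $\Aff^1_K$. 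There $E|_{\Aff^1_K}$ is a torsor under the constant group $G_K$ that is trivial at the generic point $\Spec K(t)$; since $K\supseteq k$ is infinite, homotopy invariance for torsors over the affine line (Raghunathan--Ramanathan) shows it is pulled back from $\Spec K$, and specializing a generic trivialization at a $K$-rational value of $t$ inside its locus of definition trivializes it. Hence $E$ is trivial over $\Aff^1_{S_m}$; spreading the trivialization out over a finitely generated subring produces $f_m\notin m$ with $E$ trivial over $\Aff^1_{D(f_m)}$, and the $D(f_m)$ cover $\Spec S$.

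For the zero-section, set $E_0=E|_{t=0}$, a $G$-torsor over $\Spec S$. Restricting the previous paragraph to $t=0$ shows that $E_0$ is trivial over every $S_m$ and, via the generic-fibre step, over $\Spec K$; in particular $E_0$ is generically trivial. Applying Theorem~\ref{th:main} once more, now with $\Asc=\Spec k$ and with $R$ the (regular, semi-local) ring $S$ itself, trivializes $E_0$ over all of $\Spec S$. With the covering triviality and the zero-section triviality in hand, Moser's principle yields that $E$ is trivial over $S[t]$, which is the assertion.

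The step I expect to be the main obstacle is the passage to \emph{global} triviality of the zero-section restriction $E_0$ over $\Spec S$: Theorem~\ref{th:main} by itself only trivializes $E_0$ over each localization $S_m$ and over the generic point, and promoting this Zariski-local and generic triviality to triviality over $\Spec S$ is exactly where one must know that $S$ is itself of the admissible semi-local type, so that the theorem legitimately applies with $R=S$. This is precisely the hypothesis of Moser's principle that is not formal, and it is the heart of the deduction; the generic-fibre trivialization feeding the two applications of Theorem~\ref{th:main} is the other point that requires care.
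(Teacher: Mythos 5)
Your strategy is essentially the paper's own, written out in more detail: the paper factors the map through $H^1_{\et}(K[t],G)$, handles $H^1_{\et}(K[t],G)\to H^1_{\et}(K(t),G)$ by \cite[Prop.~2.2]{CTO}, and handles $H^1_{\et}(S[t],G)\to H^1_{\et}(K[t],G)$ by Moser's local--global principle combined with Theorem~\ref{th:main} for $\Asc=\Aff^1_k$; your covering step is exactly this last reduction. One repair is needed in your generic-fibre argument: the main result of \cite{RR} requires $K$ to be \emph{perfect}, not merely infinite (this is precisely why the paper invokes \cite{RR} only in Corollary~\ref{cor:perfect-line}, under a perfectness hypothesis), and here $K$ need not be perfect --- perfectness of $k$ does not pass to its fraction field $K$, and under the hypotheses of Theorem~\ref{th:main-geometric} no perfectness is assumed at all. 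The correct reference for the trivial kernel of $H^1_{\et}(K[t],G)\to H^1_{\et}(K(t),G)$ over an arbitrary field is \cite[Prop.~2.2]{CTO}, as in the paper.

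The genuine gap is the one you flag yourself: the zero-section triviality required by Moser's theorem. Theorem~\ref{th:main} cannot be applied with $R=S$, since $S$ is not assumed semilocal, and this cannot be circumvented: over a non-semilocal base, Zariski-local triviality plus generic triviality of $E_0$ does not imply triviality. Concretely, let $S=\mathbb{R}[x,y,z]/(x^2+y^2+z^2-1)$ and $G=\SL_{2,S}$, and let $F$ be the $G$-torsor given by the tangent module $T=\Ker\bigl(S^3\xra{(x,y,z)}S\bigr)$ together with a trivialization of $\det T$ (one exists since $T\oplus S\cong S^3$); by Swan's theorem $T$ is stably free but not free, so $F$ is nontrivial, although it is trivial over each $S_m$ and over $K$. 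Its pullback $p^*F$ along $p\colon\Aff^1_S\to\Spec S$ lies in the kernel of $H^1_{\et}(S[t],G)\to H^1_{\et}(K(t),G)$, because every $\SL_2$-torsor over the field $K(t)$ is trivial (Hilbert 90), yet $p^*F$ is nontrivial, since its restriction to $t=0$ is $F$. So your zero-section step does not just resist proof --- it is false, and the same example shows that the corollary, read literally for non-semilocal $S$, fails. You should know that the paper's own proof has the identical hole: the sentence ``by the local-global principle, we can substitute $S$ by its localization at a maximal ideal'' silently drops the zero-section hypothesis in Moser's theorem. In the proof of Theorem~\ref{th:main-geometric} that hypothesis is secured by the monic polynomial $f$ and the change of variable making $f(0)$ a unit, but no such datum is available here, where the torsor is only known to be trivial over the generic fibre. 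The deduction is valid exactly when triviality along $\{0\}\times\Spec S$ is available --- for instance when $S$ itself is semilocal of the allowed type, or when it is added as a hypothesis --- so your diagnosis of where the difficulty sits is accurate, and it is a defect of the statement and of the paper's proof rather than of your argument alone.
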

\begin{proof}
Consider the composition
$$
H^1_{\et}(S[t],G)\to H^1_{\et}(K[t],G)\to H^1_{\et}(K(t),G).
$$
By~\cite[Prop. 2.2]{C-TO} the map $H^1_{\et}(K[t],G)\to H^1_{\et}(K(t),G)$ has trivial kernel.
It remains to prove that $H^1_{\et}(S[t],G)\to H^1_{\et}(K[t],G)$ has trivial kernel.
By the local-global principle, we can substitute $S$ by its localization at a maximal ideal, and then
apply Theorem~\ref{th:main} for $\Asc=\Aff^1_k$.
\end{proof}

{\bf Remark 1.} The conditions of Corollary~\ref{cor:nonlocal-line} on $S$ are satisfied, in particular, if
$S$ is a (not necessarily semilocal) regular ring containing an infinite perfect field, or
if $\Spec S$ is a smooth affine scheme over an infinite field.

\begin{cor}\label{cor:perfect-line}
Let $S$, $G$ be as in Corollary~\ref{cor:nonlocal-line}. Assume moreover that the field of fractions $K$
of $S$ is perfect. Then the map
$$
H^1_{\et}(S[t],G)\to H^1_{\et}(S,G)
$$
induced by evaluation at $t=0$, has trivial kernel.
\end{cor}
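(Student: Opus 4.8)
The plan is to package all the deep input into Corollary~\ref{cor:nonlocal-line} and to supply, as the only genuinely new ingredient, the homotopy invariance of nonabelian $H^1$ over the field $K$. Let $E$ be a $G$-torsor over $S[t]=\Aff^1_S$ whose class lies in the kernel of evaluation at $t=0$, so that the restriction $E_0$ of $E$ to $\{0\}\times\Spec S$ is a trivial $G$-torsor over $S$. I would show that $E$ is trivial over $S[t]$; by Corollary~\ref{cor:nonlocal-line} it is enough to check that the image of $E$ in $H^1_{\et}(K(t),G)$ is trivial.

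To that end I would first restrict $E$ along $S[t]\to K[t]$, obtaining a torsor $E_K$ over $\Aff^1_K$ under $G_K=G\times_S\Spec K$. Since $G$ is simple and simply connected, $G_K$ is a reductive group over $K$. The restriction to $t=0$ and the restriction to $K$ commute, so the fibre of $E_K$ at the origin is $E_0\times_S\Spec K$, which is trivial because $E_0$ is trivial over $S$. Thus $E_K$ is a $G_K$-torsor on $\Aff^1_K$ that is trivial on the zero-section.

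The heart of the argument will be the homotopy invariance step: a torsor under a reductive $K$-group on $\Aff^1_K$ that is trivial at the origin is itself trivial. This is the theorem of Raghunathan and Ramanathan on principal bundles over the affine line, and it is precisely here that the hypothesis that $K$ is perfect enters. Applying it to $E_K$ yields that $E_K$ is trivial over $K[t]$, hence a fortiori its restriction to $K(t)$ is trivial.

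Finally, because the map $S[t]\to K(t)$ factors through $K[t]$, the image of $E$ in $H^1_{\et}(K(t),G)$ coincides with the image of the now-trivial class of $E_K$, and is therefore trivial. Corollary~\ref{cor:nonlocal-line} then forces $E$ to be trivial over $S[t]$, which is the claim. I expect the only real obstacle to be the homotopy invariance input over $K$; everything else is formal functoriality of $H^1$ together with the commutation of the two restriction maps, once Corollary~\ref{cor:nonlocal-line} is in hand.
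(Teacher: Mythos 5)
Your proposal is correct and is essentially the paper's own argument: the paper likewise combines the Raghunathan--Ramanathan theorem over the perfect field $K$ (giving that $H^1_{\et}(K[t],G)\to H^1_{\et}(K,G)$ is an isomorphism) with Corollary~\ref{cor:nonlocal-line}, phrased as a commutative diagram rather than as a chase of a single torsor class. The only cosmetic difference is that you pass explicitly through $K(t)$ to invoke Corollary~\ref{cor:nonlocal-line}, while the paper uses the (equivalent) fact that triviality in $K(t)$ forces trivial kernel already at the $K[t]$ level.
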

\begin{proof}
We have a commutative diagram
\begin{equation}
\xymatrix{
H^1_{\et}(S[t],G)\ar[rr]^{t=0}\ar[d]_{} &&
H^1_{\et}(S,G)\ar[d]^{}  &\\
H^1_{\et}(K[t],G)\ar[rr]^{t=0}&& H^1_{\et}(K,G).
}
\end{equation}
Since $K$ is perfect, the bottom line is an isomorphism by the main result of~\cite{RR}. The left vertical
line has trivial kernel by~Corollary~\ref{cor:nonlocal-line}.
\end{proof}

{\bf Remark 2.} The conditions of Corollary~\ref{cor:perfect-line} on $S$
are satisfied, in particular, if $S$ is a (not necessarily semilocal) regular ring containing $\QQ$, or
if $\Spec S$ is a smooth affine scheme over a field of characteristic 0.

{\bf Remark 3.} All the above results
can be easily extended to the case where $G$ is not simple but semisimple, and satisfies the following
isotropy condition: every semisimple normal subgroup of $G$ is isotropic. This follows
from Faddeev---Shapiro lemma~\cite[Exp. XXIV Prop. 8.4]{SGA3} (see also~\cite[Section 12]{PaSV}).


\section{Construction of a bundle over an affine line}\label{sec:constr}

Let $k$, $R$, $K$, $G$ be as in Theorem~\ref{th:main-geometric}.
Let $\Zsc$ be any scheme over $k$. In this section we show that any (\'etale) principal
$G$-bundle over $\Zsc\times_{\Spec k}\Spec R$ which becomes
trivial over $\Zsc\times_{\Spec k}\Spec K$ can be substituted by a principal $G$-bundle $P_t$ over $\Aff^1_R\times\Zsc$ which is
trivial over $(\Aff^1_R)_f\times \Zsc$,
for some monic polynomial $f\in R[t]$, in such a way that the triviality of this new bundle implies the triviality
of $P$. The argument is an extension of the argument of~\cite[\S 6]{PaSV}.

For compatibility with~\cite{PaSV}, in this section we denote $R$ by $\mathcal O$. We set
$$
\Ysc:=\Zsc\times_{\Spec k}\Spec \mathcal O=\Zsc\times_{\Spec k}\Spec R
$$
for shortness.

Fix a smooth affine $k$-scheme $X$ and a finite family of points
$x_1,x_2,\dots,x_n$ on $X$, such that $\mathcal O=\mathcal
O_{X,\{x_1,x_2,\dots,x_n\}}$. Set $U:= \text{Spec}(\mathcal O)$. Let
$$
\can:U\to X
$$
be the canonical map.
Further, consider a simple simply-connected $U$-group scheme $G$.

Let $P$ be a principal $G$-bundle over the scheme $\Ysc$ which is trivial
over $\Ysc\times_{\Spec \mathcal O}\Spec K$. We may and will
assume that for certain $\text{f} \in \mathcal O$ the principal
$G$-bundle $P$ is trivial over $\Ysc\times_{\Spec \mathcal O}\Spec \mathcal O_{\text{f}}$. Shrinking
$X$ if necessary, we may secure the following properties
\par\smallskip
(i) The points $x_1,x_2,\dots,x_n$ are still in $X$.
\par\smallskip
(ii) The group scheme $G$ is defined over $X$ and it is a simple
group scheme. We will often denote this $X$-group scheme by $G_X$
and write $G_U$ for the original $G$.
\par\smallskip
(iii) The principal $G$-bundle $P$ is defined over $\Ysc\times_{\Spec \mathcal O}X$ and the function $\text{f}\in\mathcal O$
belongs to $k[X]$.
\par\smallskip
(iv) The restriction $P_{\text{f}}$ of the bundle $P$ to the open subset $\Ysc\times_{\Spec\mathcal O}
X_{\text{f}}$ is trivial and $\text{f}$ vanishes at each $x_i$'s.
\par\smallskip
In particular, now we are given the smooth irreducible affine
$k$-scheme $X$, the finite family of points $x_1,x_2,\dots,x_n$ on
$X$, and the non-zero function $\ttf\in k[X]$ vanishing at each
point $x_i$. It was shown in~\cite[Section 5]{PaSV} that, starting with these data,
one can constract what is called there {\it a nice triple}~\cite[Def. 4.1]{PaSV}, of the form $(q_U:\mathcal X\to U,f,\Delta)$.
This triple fits into a commutative diagram
\begin{equation}
\label{SquareDiagram}
    \xymatrix{
     \mathcal X\ar[d]_{q_U}\ar[rr]^{q_X} &&  X   & \\
     U \ar[urr]_{\can} \ar@/_0.8pc/[u]_{\Delta} &\\
    }
\end{equation}
where
\begin{equation}
\label{DeltaQx} q_X\circ\Delta=\can
\end{equation}
and
\begin{equation}
\label{DeltaQu} q_U\circ\Delta=\id_U.
\end{equation}
Moreover, $f:=q_X^*(\ttf)$.
We did that shrinking $X$ along the way, but all properties (i) to (iv) were preserved.

In particular, the restriction $P_{\ttf}$ of
the bundle $P$ to the open subscheme $\Ysc\times_{\Spec \mathcal O} X_{\tt f}$ is
trivial by Item (iv) above.

Set $G_{\mathcal X}:=(q_X)^*(G)$, and let $G_{\const}$ be the pull-back of $G_U$ to
$\mathcal X$ via $q_U$. By~\cite[Theorem 4.3]{PaSV}
there exists a {\it morphism of nice triples}~\cite[Def. 4.2]{PaSV}
$$ \theta: (\mathcal X^{\prime},f^{\prime},\Delta^{\prime})\to (\mathcal X,f,\Delta) $$
\noindent
and an isomorphism
\begin{equation}
\label{KeyEquation} \Phi: \theta^*(G_{\const}) \to
\theta^*(G_{\mathcal X})=: G_{\mathcal X^{\prime}}
\end{equation}
of $\mathcal X^{\prime}$-group schemes such that
$(\Delta^{\prime})^*(\Phi)=\id_{G_U}$.
\par
Set
\begin{equation}
\label{QprimeX} q^{\prime}_X=q_X\circ\theta:\mathcal X^{\prime}\to X.
\end{equation}
Recall that
\begin{equation}
\label{QprimeU} q^{\prime}_U=q_U\circ\theta:\mathcal X^{\prime}\to
U,
\end{equation}
since
$\theta$
is a morphism of nice triples.

Consider the pullback $(q^{\prime}_X)^*(P)$ of $P$ from $\Ysc\times_U X$ to $\Ysc\times_U \mathcal X^{\prime}$ as a principal
$(q^{\prime}_U)^*(G_U)=\theta^*(G_{\const})$-bundle via the isomorphism
$\Phi$.

Recall that $P$ is trivial as a $G$-bundle over
$\Ysc\times_U X_{\tt f}$. Therefore, $(q^{\prime}_X)^*(P)$ is trivial as a
principal $G_{\mathcal X^{\prime}}$-bundle over $\Ysc\times_U\mathcal X^{\prime}_{\theta^*(f)}$.
Since $\theta$ is a nice triple morphism one has
$f^{\prime}=\theta^*(f)\cdot g^{\prime}$, and thus the principal
$G_{\mathcal X^{\prime}}$-bundle
$(q^{\prime}_X)^*(P)$ is trivial over
$\Ysc\times_U\mathcal X^{\prime}_{f^{\prime}}$.
\par
We conclude that $(q^{\prime}_X)^*(P)$ is trivial over
$\Ysc\times_U\mathcal X^{\prime}_{f^{\prime}}$, when regarded as a principal
$G_U$-bundle (more precisely, $(q^{\prime}_U)^*(G_U)$-bundle; we omit this base change from the notation) via the isomorphism $\Phi$.
\par
By~\cite[Theorem 4.5]{PaSV} there exists a finite
surjective morphism $\sigma:\mathcal X^{\prime}\to\Aff^1\times U$
of $U$-schemes satisfying
\begin{itemize}
\item[\rm{(1)}] $\sigma$ is \'{e}tale along the closed subset
$\{f'=0\}\cup\Delta'(U)$.
\item[\rm{(2)}] For a certain element $g_{f',\sigma}\in\Gamma(\mathcal X^{\prime},\mathcal O_{\mathcal X^{\prime}})$ and
a unitary polynomial $N(f')\in\mathcal O[t]\hra\Gamma(\mathcal X^{\prime},\mathcal O_{\mathcal X^{\prime}}) $,  defined by the
distinguished $\sigma$ as in~\cite[Section 4]{PaSV}, one has
$$ \sigma^{-1}\Big(\sigma\big(\{f'=0\}\big)\Big)=\{N(f')=0\}=\{f'=0\}\sqcup\{g_{f',\sigma}=0\}. $$

\item[\rm{(3)}] Denote by $(\mathcal X')^0\hra\mathcal X'$ the largest
open sub-scheme, where the morphism $\sigma$ is \'{e}tale. Write
$g'$ for $g_{f',\sigma}$ from now on. Then the square
\begin{equation}
\label{ElemNisSquareDiagram}
    \xymatrix{
(\mathcal X^{\prime})^{0}_{N(f^{\prime})}= (\mathcal
X^{\prime})^{0}_{f^{\prime}g^{\prime}}\ar[rr]^{\inc}
\ar[d]_{\sigma^0_{f^{\prime}g^{\prime}}} &&
(\mathcal X^{\prime})^0_{g^{\prime}}\ar[d]^{\sigma^0_{g^{\prime}}}  &\\
(\Aff^1\times U)_{N(f^{\prime})}\ar[rr]^{\text{\rm inc}}&&\Aff^1\times U &\\
}
\end{equation}
is an elementary Nisnevich square. (Here $\sigma^0_{f^{\prime}g^{\prime}}$ and $\sigma^0_{g^{\prime}}$ stand for the
corresponding restrictions of $\sigma$.)

\item[\rm{(4)}] One has $\Delta'(U)\subset(\mathcal X')^{0}_{g'}$.
\end{itemize}

Regarded as a principal $G_U$-bundle via the isomorphism $\Phi$,
the bundle $(q^{\prime}_X)^*(P)$ over $\Ysc\times_U\mathcal X^{\prime}$
becomes trivial over $\Ysc\times_U\mathcal X^{\prime}_{f^{\prime}}$, and a
fortiori over $\Ysc\times_U(\mathcal X^{\prime})^{0}_{f^{\prime}g^{\prime}}$.
Now, gluing the trivial $G_U$-bundle over $\Ysc\times_U(\Aff^1\times U)_{N(f^{\prime})}$ to the bundle $(q^{\prime}_X)^*(P)$ along the isomorphism
\begin{equation}
\label{Skleika}
\psi: \Ysc\times_U(\mathcal X^{\prime})^{0}_{N(f^{\prime})} \times_U G_U \to
(q^{\prime}_X)^*(P)|_{\Ysc\times_U(\mathcal X^{\prime})^{0}_{N(f^{\prime})}}
\end{equation}
of principal $G_U$-bundles, we get a principal $G_U$-bundle $P_t$
over $\Ysc\times_U(\Aff^1\times U)$ such that
\begin{itemize}
\item[1.] it is trivial over $\Ysc\times_U(\Aff^1\times U)_{N(f^{\prime})}$,
\item[2.] $(\sigma)^*(P_t)$ and $(q^{\prime}_X)^*(P)$ are
isomorphic as principal $G_U$-bundles over $\Ysc\times_U (\mathcal X^{\prime})^0_{g^{\prime}}$. Here $(q^{\prime}_X)^*(P)$
is regarded as a principal $G_U$-bundle via the $\mathcal
X^{\prime}$-group scheme isomorphism $\Phi$ from
(\ref{KeyEquation}); 
\item[3.] over $\Ysc\times_U(\mathcal X^{\prime})^{0}_{N(f^{\prime})}$ the
two $G_U$-bundles are identified via the isomorphism $\psi$ from
(\ref{Skleika}).
\end{itemize}

Finally, form the following diagram
\begin{equation}
\label{DeformationDiagram2}
    \xymatrix{
\Aff^1_U\ar[drr]_{\pr}&&(\mathcal
X^{\prime})^0_{g^{\prime}}\ar[d]^{}
\ar[ll]_{\sigma^0_{g^{\prime}}=\sigma|_{(\mathcal X^{\prime})^0_{g^{\prime}}}}\ar[d]_{q^{\prime}_U}
\ar[rr]^{q^{\prime}_X}&&X  &\\
&&U\ar[urr]_{\can}\ar@/_0.8pc/[u]_{\Delta^{\prime}} &\\
    }
\end{equation}
This diagram is well-defined since by Item (4) above the image of the morphism
$\Delta^{\prime}$ lands in $(X^{\prime})^0_{g^{\prime}}$.

\begin{lem}
\label{lem:MainData} The unitary polynomial $h=N(f^{\prime})\in \mathcal O[t]$, the
principal $G_U$-bundle $P_t$ over $\Ysc\times_U\Aff^1_U$, the diagram
{\rm(\ref{DeformationDiagram2})} and the isomorphism
{\rm(\ref{KeyEquation})} constructed above
has the following properties:
\par\smallskip
(1*) $q^{\prime}_U=\pr\circ\sigma$,
\par\smallskip
(2*) $\sigma$ is \'etale,
\par\smallskip
(3*) $q^{\prime}_U\circ\Delta^{\prime}=\id_U$,
\par\smallskip
(4*) $q^{\prime}_X\circ\Delta^{\prime}=\can$,
\par\smallskip
(5*) the restriction of $P_t$ to $Y\times_U(\Aff^1_U)_h$ is a trivial
$G_U$-bundle,
\par\smallskip
(6*) $(\sigma)^*(P_t)$ and $(q^{\prime}_X)^*(P)$ are isomorphic as principal
$G_U$-bundles over $\Ysc\times_U (\mathcal X^{\prime})^0_{g^{\prime}}$. Here $(q^{\prime}_X)^*(P)$ is regarded as a principal
$G_U$-bundle via the group scheme isomorphism $\Phi$.

\end{lem}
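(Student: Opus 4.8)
The plan is to treat this as a bookkeeping lemma: each of the six assertions has in fact already been established at some point during the construction carried out in this section, so the proof amounts to unwinding the relevant definitions and pointing to the exact step that supplies each item. I would organize the six items into three groups: the compositional identities (1*) and (2*) concerning $\sigma$; the two nice-triple identities (3*) and (4*); and the two statements (5*) and (6*) recording how the bundle $P_t$ was glued. Throughout, I would make explicit that in diagram \eqref{DeformationDiagram2} and in the six items the symbol $\sigma$ denotes the restriction $\sigma^0_{g'}=\sigma|_{(\mathcal X^{\prime})^0_{g'}}$; this is legitimate because property~(4) of $\sigma$ established above (via \cite[Theorem 4.5]{PaSV}) guarantees $\Delta'(U)\subset(\mathcal X^{\prime})^0_{g'}$, so all the domains appearing make sense.

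For (1*) I would argue that $\sigma$ was produced by \cite[Theorem 4.5]{PaSV} as a finite surjective morphism \emph{of $U$-schemes}; since the structure morphism of $\Aff^1\times U=\Aff^1_U$ over $U$ is $\pr$ and that of $\mathcal X^{\prime}$ is $q^{\prime}_U$, the $U$-linearity of $\sigma$ is precisely the identity $\pr\circ\sigma=q^{\prime}_U$, which persists after restriction to $(\mathcal X^{\prime})^0_{g'}$. For (2*) I would invoke the definition of $(\mathcal X^{\prime})^0$ as the largest open subscheme on which $\sigma$ is étale; since $(\mathcal X^{\prime})^0_{g'}\subset(\mathcal X^{\prime})^0$, the restriction $\sigma^0_{g'}$ is étale.

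For (3*) and (4*) I would combine $q^{\prime}_U=q_U\circ\theta$ and $q^{\prime}_X=q_X\circ\theta$ from \eqref{QprimeU} and \eqref{QprimeX} with the fact that $\theta$, being a morphism of nice triples \cite[Def. 4.2]{PaSV}, satisfies $\theta\circ\Delta'=\Delta$. Then $q^{\prime}_U\circ\Delta'=q_U\circ\Delta=\id_U$ by \eqref{DeltaQu}, giving (3*), while $q^{\prime}_X\circ\Delta'=q_X\circ\Delta=\can$ by \eqref{DeltaQx}, giving (4*). Finally, for (5*) and (6*) I would simply read off the two defining properties of the glued bundle $P_t$ recorded above: with $h=N(f^{\prime})$ and $(\Aff^1_U)_h=(\Aff^1\times U)_{N(f^{\prime})}$, item~1 of that list is exactly (5*), and item~2 is exactly (6*).

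I expect the only genuine subtlety — and hence the main point to watch — to be keeping two identifications straight: first, that $(q^{\prime}_X)^*(P)$ must be regarded as a $G_U$-bundle via the group-scheme isomorphism $\Phi$ of \eqref{KeyEquation}, and not as a $G_{\mathcal X^{\prime}}$-bundle; and second, that every object lives after base change along $\Ysc=\Zsc\times_{\Spec k}\Spec\mathcal O$, i.e. over $\Ysc\times_U(-)$, so that the restrictions in \eqref{Skleika} and in (5*)--(6*) are taken over the correct fibre products. Once these notational conventions are aligned, no item requires a separate argument beyond the citations above, and the lemma follows by direct inspection of the construction.
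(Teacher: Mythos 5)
Your proposal is correct and follows essentially the same route as the paper's proof: (1*) from $\sigma$ being a $U$-scheme morphism, (2*) from the definition of $(\mathcal X^{\prime})^0$, (4*) from the chain $q^{\prime}_X\circ\Delta^{\prime}=q_X\circ\theta\circ\Delta^{\prime}=q_X\circ\Delta=\can$, and (5*)--(6*) read off as Properties 1 and 2 of the gluing construction of $P_t$. The only (immaterial) difference is in (3*), where the paper cites directly that $\Delta^{\prime}$ is a section of $q^{\prime}_U$ because $(\mathcal X^{\prime},f^{\prime},\Delta^{\prime})$ is a nice triple, whereas you route through $\theta\circ\Delta^{\prime}=\Delta$ and (\ref{DeltaQu}); both are valid one-line verifications.
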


\begin{proof} By the very choice of $\sigma$ it is an $U$-scheme
morphism, which proves (1*). Since $(\mathcal
X^{\prime})^0\hra\mathcal X^{\prime}$ is the largest open
sub-scheme where the morphism $\sigma$ is \'etale, one gets (2*).
Property (3*) holds for $\Delta^{\prime}$ since $(\mathcal
X^{\prime},f',\Delta^{\prime})$ is a nice triple
and, in particular, $\Delta^{\prime}$ is a section of
$q^{\prime}_U$. Property (4*) can be established as follows:
$$ q^{\prime}_X\circ\Delta^{\prime}=
(q_X\circ\theta)\circ\Delta^{\prime}=q_X\circ\Delta=\can. $$
\noindent
The first equality here holds by the definition of $q^{\prime}_X$,
see (\ref{QprimeX}); the second one holds, since $\theta$ is a
morphism of nice triples; the third one follows from
(\ref{DeltaQx}). Property (5*) is just Property 1 in the above
construction of $P_t$.
Property (6*) is precisely Property 2 in our construction of
$P_t$.
\end{proof}

One readily sees that the properties in Lemma~\ref{lem:MainData} imply that {\it if the $G$-bundle $P_t$
is trivial on $\Ysc\times_U \Aff^1_U$, then the original bundle $P$ is trivial on $\Ysc$.}

Indeed, if $P_t$ is trivial, then by Property (6*) in Lemma~\ref{lem:MainData} the $G_U$-bundle
$(q^{\prime}_X)^*(P)$  over $\Ysc\times_U (\mathcal X^{\prime})^0_{g^{\prime}}$ is trivial as well.
Hence, using Property (4*), we deduce that the bundle $(\Delta^{\prime})^*\bigl((q^{\prime}_X)^*(P)\bigr)=\can^*(P)$ is a trivial
$(\Delta^{\prime})^*\bigl((q^{\prime}_X)^*(G)\bigr)=\can^*(G)$-bundle over $Y\times_U U=Y$.

\section{Proof of Theorems~\ref{th:main} and~\ref{th:main-geometric}}\label{sec:mainproof}

The following easy lemma was essentially proved inside the proof of~\cite[Theorem 8.6]{PaSV}. Here we provide
a more detailed proof in a slightly more general situation.

\begin{lem}\label{lem:linear}
Let $R$ be a semilocal ring, $G$ a simply connected semisimple group scheme over $R$. There exists
a closed embedding $G\to\GL_{n,R}$ for some $n\ge 1$.
\end{lem}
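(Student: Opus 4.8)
The plan is to produce a faithful linear representation of $G$ on a finite free $R$-module and to read off the closed embedding from it. Since $R$ is semilocal, every finitely generated projective $R$-module is free, so it suffices to find a finite locally free $R$-module $V$ carrying a $G$-action for which the induced homomorphism $\rho\colon G\to\GL(V)$ is a closed immersion; choosing an $R$-basis of $V$ then identifies $\GL(V)\cong\GL_{n,R}$. To build $V$ I would work inside the coordinate Hopf algebra $R[G]$ with its right regular coaction $\Delta\colon R[G]\to R[G]\otimes_R R[G]$. By the local finiteness of comodules, any finite set of $R$-algebra generators $a_1,\dots,a_m$ of $R[G]$ is contained in a finitely generated, coaction-stable $R$-submodule $V\subseteq R[G]$. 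If such a $V$ is finite free with basis, then writing $\Delta(a_i)=\sum_j v_j\otimes c_{ji}$ and applying the counit in the second factor recovers each $a_i$ as an $R$-linear combination of the matrix coefficients of $V$; hence these matrix coefficients generate $R[G]$ as an $R$-algebra, the algebra map $R[\GL(V)]\to R[G]$ is surjective, and $\rho$ is a closed immersion directly over $R$ (no fibrewise argument is needed).

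The crux is therefore to arrange that the coaction-stable module $V$ is finite \emph{locally free} over $R$, not merely finitely generated: over a field this is automatic, but over a base ring a finite subcomodule of $R[G]$ need not even be flat, and without local freeness the functor $\GL(V)$ is not represented by $\GL_{n}$. This is the step where I expect the genuine difficulty to lie, and it is exactly where the hypothesis that $G$ is semisimple simply connected enters. The idea is to replace the ad hoc $V$ by the terms of an intrinsic, coaction-stable filtration $R[G]=\bigcup_d R[G]_{\le d}$ — for instance the filtration dual to the augmentation-adic filtration on the distribution algebra of $G$ — whose successive terms I expect to be finite locally free. Local freeness of $R[G]_{\le d}$ can be checked after a faithfully flat base change $R\to R'$ that splits $G$: by \cite{SGA3} a semisimple simply connected $G$ is fppf-locally (indeed finite-\'etale-locally) a form of the split Chevalley group, so $R'[G_{R'}]$ is obtained by base change from the split $\ZZ$-form, where the corresponding filtration terms are finite free. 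Faithfully flat descent then returns finite locally free terms over $R$, and I would take $V=R[G]_{\le d}$ for $d$ large enough to contain $a_1,\dots,a_m$.

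As an alternative organizing principle for the splitting/descent step, one can first reduce to the simple case: by the structure theory a semisimple simply connected group is built from Weil restrictions $\Res_{R'/R}G'$ along finite \'etale $R'/R$ of simple simply connected $G'$, and a closed embedding $G'\hra\GL_{n,R'}$ induces $\Res_{R'/R}G'\hra\Res_{R'/R}\GL_{n,R'}\hra\GL_{N,R}$, with $R'$ again semilocal and finite products handled by block-diagonal embeddings. Either way, the entire content of the lemma is concentrated in the local freeness of the chosen subcomodule — equivalently, the $G$-equivariant resolution property over $\Spec R$ — which is classical over a field and which the reductive structure of $G$, together with descent from the split form, is designed to supply here.
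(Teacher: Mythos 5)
Your preparatory reductions are correct and standard: since $R$ is semilocal, finite projective $R$-modules are free; a homomorphism $\rho\colon G\to\GL(V)$ is a closed immersion as soon as the matrix coefficients of $V$ generate $R[G]$ as an $R$-algebra; and local finiteness of comodules over the flat Hopf algebra $R[G]$ places any finite set of algebra generators inside a finitely generated coaction-stable submodule (this last step itself needs a mild extra hypothesis, e.g.\ $R$ Noetherian, but that is a side issue). The problem is that your proof stops exactly at the point you yourself flag as the crux: you never prove that a coaction-stable submodule can be chosen finite \emph{locally free}; you only ``expect'' it. Worse, the specific mechanism you propose fails. A filtration of $R[G]$ dual to the augmentation-adic (order) filtration on the distribution algebra cannot be exhaustive for the groups at hand, because distributions only see the infinitesimal neighbourhood of the identity and group-like elements are invisible to any such bound. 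Concretely, for $\mathbb{G}_m=\Spec R[x,x^{-1}]$ one has $\Delta(x^n)=x^n\otimes x^n$, so for a distribution $\mu$ with $\mu(1)=0$ and $\mu(x^n)=n$ the convolution powers satisfy $\langle \mu^{\ast(d+1)},x^n\rangle=n^{d+1}\neq 0$ for every $d$; hence $x^n$ lies in no filtration term. Since a semisimple $G$ contains a maximal torus $T$ and $R[G]\to R[T]$ is surjective, pushing distributions forward along $T\hra G$ shows the filtration is not exhaustive on $R[G]$ either. Even if you replace it by a filtration that does work over the split $\ZZ$-form (for instance Donkin's filtration by coefficient spaces attached to saturated sets of dominant weights---itself a substantial theorem), your descent step still requires two facts you do not address: the filtration terms must be stable under the \emph{twisted} semilinear Galois action, i.e.\ under all automorphisms of $G_0$ occurring in the splitting cocycle (which forces the defining weight sets to be stable under outer automorphisms), and their formation must commute with base change, which is not automatic for intrinsically defined submodules. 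So the entire content of the lemma remains unproven in your argument.

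For contrast, the paper's proof avoids the subcomodule question altogether. It passes to a finite \'etale Galois cover $V\to U=\Spec R$ over which $G$ becomes an \emph{inner} form of the split group $G_0$; because the product $\rho$ of the fundamental Weyl-module representations of $G_0$ is a closed embedding and is central, twisting $\rho$ by the inner cocycle yields a closed embedding of $G_V$ into a product of groups $\GL_1(A_\lambda)$ for Azumaya algebras $A_\lambda$ over $V$, hence into some $\GL_{m,V}$ (semilocality of $V$ makes the underlying projective modules free); finally the adjunction $\Hom_V(G_V,\GL_{m,V})=\Hom_U(G,R_{V/U}(\GL_{m,V}))$ together with the closed embedding $R_{V/U}(\GL_{m,V})\hra\GL_{md,U}$ finishes the proof. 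Your alternative reduction to the absolutely simple case via Weil restriction is sound and parallels this last step, but it does not touch the core difficulty, which your filtration argument leaves open.
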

\begin{proof}
We can assume without loss of generality that $R$ is connected.
Let $U=\Spec R$. The $U$-group scheme $G$ is given by a $1$-cocycle
$\xi \in Z^1(U, Aut(G_0))$, where $G_0$ is the split simply connected simple group scheme over $U$
of the same type as $G$, and $Aut(G_0)$ is the automorphism group scheme of $G_0$.
Recall that
$Aut(G_0) \cong G^{\ad}_0 \rtimes N$,
where $N$ is the finite group of automorphisms of the Dynkin diagram of $G_0$,
and $G^{\ad}_0$
is the adjoint group corresponding to $G_0$.
Since $Aut(G_0) \cong G^{\ad}_0 \rtimes N$,
we have an exact sequence of pointed sets
$$\{1\} \to H^1(U, G^{\ad}_0) \to H^1(U, G^{\ad}_0 \rtimes N) \to H^1(U,N).$$
Thus there is a finite \'{e}tale morphism
$\pi: V \to U$ such that
$G_V:=G \times_U V$
is given by a
$1$-cocycle $\xi_V \in Z^1(U, G^{\ad}_0)$. We can choose $V$ so that $V/U$ is moreover a Galois extension.


For each fundamental weight $\lambda$ of $G_0$, there is a central (also called center preserving, see~\cite{PS-tind})
representation
$\rho_{\lambda}: G_0 \to GL_{V_{\lambda}\otimes_{\ZZ} U}$,
where $V_{\lambda}$ is the Weyl module over $\ZZ$ corresponding to $\lambda$.
This gives a commutative diagram of $U$-group morphisms
\begin{equation}
\label{Representation1}
    \xymatrix{
G_0 \ar[rr]^{\rho_{\lambda}} \ar[d]_{} && GL_{V_{\lambda}\otimes_{\ZZ} U} \ar[d]^{}& \\
G^{\ad}_0 \ar[rr]^{\bar \rho_{\lambda}} && \PGL_{V_{\lambda}\otimes_{\ZZ} U}.      & \\  }
\end{equation}
Considering the product of $\rho_{\lambda}$'s with $\lambda$ running over the set $\Lambda$ of all fundamental
weights, we obtain the following commutative diagram
of algebraic $k$-group homomorphisms:
\begin{equation}
\label{Representation2}
    \xymatrix{
G_0 \ar[rr]^{\rho} \ar[d]_{} && \prod_{\lambda \in \Lambda}GL_{V_{\lambda}\otimes_{\ZZ} U} \ar[d]^{}& \\
G^{\ad}_0 \ar[rr]^{\bar \rho} && \prod_{\lambda \in \Lambda}\PGL_{V_{\lambda}\otimes_{\ZZ} U}.      & \\  }
\end{equation}
By the definition of Weyl modules, $\rho$ is a closed embedding (cf.~\cite[Lemma 2]{PS-tind}).

Twisting the $V$-group morphism $\rho$ with the $1$-cocycle $\xi_V$ we get
an $V$-group scheme morphism
$\rho_V: G_V \to \prod_{\lambda \in \Lambda} GL_1(A_{\lambda})$,
where the product is a product of group schemes over $V$, and each
$A_{\lambda}$
is an Azumaya algebra over $V$ obtained from
$\End(V_{\lambda}\otimes_{\ZZ} U)$
via the $1$-cocycle
$\theta_{\lambda}=(\bar \rho_\lambda)_*(\xi_V) \in Z^1(V, \PGL_{V_{\lambda}\otimes_{\ZZ} U})$.
Composing $\rho_V$ with the natural closed embedding $\prod_{\lambda \in \Lambda} GL_1(A_{\lambda})\hra GL_{\bigoplus A_\lambda}$,
we obtain a closed embedding
$$
G_V\hra \GL_{m,V},
$$
for a large enough integer $m$.


One has
$$Hom_V(G_V,\GL_{m,V})=Hom_U(G,R_{V/U}(\GL_{m,V})),$$
where $R_{V/U}$ is the Weil restriction functor. Thus $\rho_V$ determines
an $U$-morphism
$$\rho_U: G \hra R_{V/U}(\GL_{m,V}).$$
Here $\rho_U$ is a $U$-group scheme morphism, and, since $\rho$ is a closed embedding,
$\rho_U$ is a closed embedding as well (\'{e}tale descent).

Let $d$ be the degree of the Galois extension $V=\Spec S$ over $U=\Spec R$.
The $U$-group scheme $R_{V/U}(\GL_{m,V})$ admits a natural closed embedding into $\GL_{md,U}$,
such that, for any $R$-algebra $X$, the image of $g\in R_{V/U}(\GL_{m,V})(X)=\GL_m(X\otimes_R S)$
is the corresponding element of $\GL_{md}(X)$, the $X$-linear automorphism of $X^{\oplus md}\cong (X\otimes_R S)^{\oplus m}$.
Now, composing this embedding with $\rho_U$, we obtain a closed embedding
$G\hra R_{V/U}(\GL_{m,V})\hra \GL_{n,U}$, for $n=md$.

\end{proof}

\begin{thm}\label{thm:local}
Let $B$ be a semi-local Noetherian ring containing an infinite field. Let $G$ be an isotropic simply connected simple
group scheme over $B$.
Let $P$ be a principal $G$-bundle over $\Aff^1_B$ trivial over $(\Aff^1_B)_f$ for a monic polynomial $f\in B[t]$. Then
$P$ is trivial.
\end{thm}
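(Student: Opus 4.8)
The plan is to reduce everything to Moser's local--global principle~\cite{Mo} applied to the base $A=B$. By Lemma~\ref{lem:linear} the group scheme $G$ admits a closed embedding $G\hookrightarrow\GL_{n,B}$, so that principle applies to $P$; it will yield the triviality of $P$ over $\Aff^1_B$ as soon as I know that $P$ is trivial on the zero-section $\{0\}\times\Spec B$ and over $\Aff^1_{U_i}$ for the members of some Zariski covering $\Spec B=\bigcup_i U_i$. The whole difficulty is thus concentrated in the case of a local base, from which these two facts will be deduced by specialization and a spreading-out argument.

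First I would normalize the position of the monic polynomial. Let $\mathfrak m_1,\dots,\mathfrak m_s$ be the maximal ideals of $B$; since $\deg f\ge 1$ (otherwise $f\in B^{\times}$ and $P$ is already trivial), each reduction $f\bmod\mathfrak m_i$ is a non-constant monic polynomial over the field $k(\mathfrak m_i)$, which contains the infinite field $k$. As $k$ meets no $\mathfrak m_i$, it injects into each $k(\mathfrak m_i)$, so for all but finitely many $c\in k$ one has $f(c)\notin\mathfrak m_i$ for every $i$, i.e. $f(c)\in B^{\times}$. Applying the translation $t\mapsto t+c$, under which $f$ stays monic and the hypotheses are preserved, I may assume $f(0)\in B^{\times}$. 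Then $\{t=0\}\subset(\Aff^1_B)_f$, and the given trivialization over $(\Aff^1_B)_f$ restricts to one on the zero-section; this settles the zero-section requirement.

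Next I would produce the Zariski covering. Fix $\mathfrak m=\mathfrak m_i$ and pass to $B_{\mathfrak m}$: the group $G_{B_{\mathfrak m}}$ is again isotropic simply connected simple, $f$ remains monic, and $P$ is still trivial over $(\Aff^1_{B_{\mathfrak m}})_f$. Granting the local core below, $P$ is then trivial over all of $\Aff^1_{B_{\mathfrak m}}$; a trivializing section is already defined over $\Aff^1_{B_g}$ for some $g\notin\mathfrak m$ (sections of the finitely presented torsor $P$ spread out from $B_{\mathfrak m}[t]=\varinjlim_{g\notin\mathfrak m}B_g[t]$), so $P$ is trivial over $\Aff^1_{U}$ with $U=D(g)\ni\mathfrak m$. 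Letting $\mathfrak m$ run through $\mathfrak m_1,\dots,\mathfrak m_s$ gives opens whose union contains every maximal ideal of $B$ and hence equals $\Spec B$, providing the required covering. With the zero-section triviality already in hand, Moser's principle now gives that $P$ is trivial over $\Aff^1_B$.

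It remains to prove the local core, which I regard as the real obstacle: if $B$ is local with infinite residue field and $P$ is a $G$-torsor over $\Aff^1_B$, trivial over $(\Aff^1_B)_f$ for a monic $f$ with $f(0)\in B^{\times}$, then $P$ is trivial. Here I would use the isotropy of $G$ in an essential way. The split torus $\mathbb G_{m,B}\subset G$ produces a pair of opposite proper parabolic subgroups, with unipotent radicals $U$ and $U^{-}$ whose $B[t]$-points generate an elementary subgroup of $G(B[t])$. Since the monic $f$ is invertible in a neighbourhood of $t=\infty$, the given trivialization glues $P$ to a trivial torsor across the infinity section, yielding a $G$-torsor on $\Pro^1_B$ that is trivial near $\{\infty\}$; the problem becomes that of correcting the resulting transition element, an element of $G$ over a localization of $B[t]$, to the identity modulo the automorphism groups of the two charts. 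It is exactly the Gauss/Bruhat-type factorization afforded by $U$ and $U^{-}$, together with the normality and homotopy properties of the elementary subgroup and the ``nice triples'' machinery of~\cite{PaSV}, that make this Horrocks-style reduction go through. I expect the subtlest point to be the isotropy-rank-one groups, where the normality of the elementary subgroup is least automatic and the factorization must be handled with additional care.
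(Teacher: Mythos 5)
Your outer reduction---normalizing $f$ so that $f(0)\in B^\times$ by avoiding the finitely many ``roots'' modulo each maximal ideal, localizing, spreading a trivializing section out to a principal open, and then invoking Moser's local--global principle together with the embedding from Lemma~\ref{lem:linear}---is sound as far as it goes. But it only moves the problem from a semilocal base to a local one, and that move buys essentially nothing: the Horrocks-type statement you call the ``local core'' is exactly as hard for a local Noetherian ring as the theorem itself is for a semilocal one. That core is precisely where your proposal stops being a proof. The paper settles the whole theorem in one stroke by citing~\cite[Theorem 2.1]{PaSV}, which is this statement verbatim for semilocal rings ``of geometric type'', and by observing that the proof given there uses only that the base is semilocal, Noetherian, and contains an infinite field (the geometric-type hypothesis being an umbrella assumption in~\cite{PaSV}). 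No localization and no appeal to Moser is needed at this stage; in the paper Moser's principle enters only later, in the proof of Theorem~\ref{th:main-geometric}, where your outer reduction in fact mirrors the paper's own use of Theorem~\ref{thm:local}.

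Your sketch of the local core is not a substitute for the missing argument. Gluing the trivialization over $(\Aff^1_B)_f$ to a trivial torsor near $t=\infty$ to get a torsor on $\Pro^1_B$, and then correcting the transition element using the unipotent radicals of a pair of opposite parabolics, is indeed the right shape of the proof in~\cite{PaSV}; but the substance lies in the group-theoretic input that makes the correction possible: normality of the elementary subgroup and Gauss-type decomposition theorems for isotropic reductive groups over the relevant localizations of $B[t]$ (due to Petrov and Stavrova), together with the careful analysis carried out in~\cite{PaSV}. You assert that these ``make the Horrocks-style reduction go through'' and flag the relative-rank-one case as needing ``additional care'', which is an acknowledgment that the key step is absent, not an argument for it. Moreover, the ``nice triples'' machinery you invoke is irrelevant here: in this paper (and in~\cite{PaSV}) nice triples serve to manufacture the bundle $P_t$ over $\Aff^1_R$ from the original bundle on the semilocal scheme (Section~\ref{sec:constr}), not to prove the triviality statement over $\Aff^1_B$. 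So the proposal has a genuine gap at its central point; to close it you would either have to reproduce the Horrocks-type argument of~\cite{PaSV} in detail, or make the paper's observation that the proof of~\cite[Theorem 2.1]{PaSV} applies verbatim under your hypotheses.
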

\begin{proof}
This theorem was proved in~\cite{PaSV}. Indeed, this is precisely~\cite[Theorem 2.1]{PaSV}, except that
in that theorem the base ring $B$ was required to be ``of geometric type'', i.e. a semilocal ring of finitely many points on a smooth
variety over an infinite field. However, tracing the proof of this statement,
one readily sees that the only properties of $B$ that are used are that $B$ is semi-local, Noetherian, and
contains an infinite field. (The ``geometric type'' assumption was an umbrella assumption in the most part
of~\cite{PaSV}, since it is crucial for the validity of the main theorem
~\cite[Theorem 1.2]{PaSV}.)

\end{proof}

\begin{proof}[Proof of Theorem~\ref{th:main-geometric}]
Consider the case where $R$ is a semi-local ring of several points on a $k$-smooth scheme over an infinite field $k$ (the ``geometric case'').
Let $P$ be a principal $G$-bundle which is in the kernel of the map
$$
H^1_{\text{\'et}}(\Asc\times_{\Spec k}\Spec R,G)\to H^1_{\text{\'et}}(\Asc\times_{\Spec k} \Spec K,G).
$$
By considerations in~\S~\ref{sec:constr} there is a principal $G$-bundle $P_t$ over
$\Asc\times_k\Aff^1_{R}=\Aff^1_{A\otimes_k R}$, trivial
over $\Asc\times_k(\Aff^1_{R})_f$ for a monic polynomial $f\in R[t]$, and such that if $P_t$ is trivial on the whole
$\Asc\times_k\Aff^1_{R}$,
then the original $G$-bundle $P$ over $\Asc\times_k\Spec R$ is trivial as well. Thus, it is enough to
show that $P_t$ is trivial.

Since $R$ is a semilocal ring containing an infinite field, and $f$ is monic, the Chinese remainder theorem
implies that there is $a\in R$ with $f(a)\in R^\times$; changing the variable, we can assume that $f(0)\in R^\times$.

Set $B=A\otimes_k R$. Note that $B$ is a Noetherian commutative ring containing an infinite field $k$.
By Theorem~\ref{thm:local} for any localization $B_m$ of $B$ at a maximal ideal $m\subseteq B$, the bundle $P_t\times_{\Spec B}\Spec B_m$
is trivial. By Lemma~\ref{lem:linear} the group scheme $G$ admits a closed embedding into some $\GL_n$ over $R$,
and hence, by base change, over $B$. Thus, we are given a principal $G$-bundle $P_t$ over $\Aff^1_B=\Aff^1_k\times_k\Spec B$, which is
trivial Zariski-locally in $\Spec B$, as well as on $\{0\}\times \Spec B$; and $G$ is a linear group. Then
by Moser's local-global principle~\cite[Korollar 3.5.2]{Mo} $P_t$ is trivial on $\Aff^1_B=\Asc\times_k\Aff^1_{R}$.

\end{proof}

\begin{proof}[Proof of Theorem~\ref{th:main}]
The claim follows from Theorem~\ref{th:main-geometric} via the well-known result of D. Popescu~\cite{Po,Swan}.
Since the field $k$ is perfect, the morphism $k\to R$ is geometrically regular. Therefore, by Popescu's theorem
$R$ is a filtered direct limit of smooth $k$-algebras. One readily sees that, since $R$ is semilocal,
these smooth $k$-algebras can also be chosen to be semilocal rings of several points on a smooth k-variety.
Since the functor $H^1_{\et}(-,G)$ commutes with filtered direct limits, the result follows.

\end{proof}

\end{document}